\documentclass[a4paper,reqno,index]{amsart}
\reversemarginpar
\usepackage{amsmath, amsfonts, amsthm, amssymb}
\usepackage[maxbibnames=9]{biblatex}
\usepackage{mathrsfs}
\usepackage{fancyhdr}
 
\usepackage{enumerate}   
\usepackage{blkarray}
\usepackage{mathrsfs}
\usepackage[title]{appendix}
\usepackage{amsthm}
\usepackage{xcolor}
\usepackage{graphicx}
\usepackage[normalem]{ulem}
\usepackage{morefloats}
\usepackage{float}

\usepackage{hyperref}
\usepackage{mathtools}

\usepackage[normalem]{ulem} 

 \numberwithin{equation}{section}

\allowdisplaybreaks

\makeindex

\hypersetup{
    bookmarks=true,         
    unicode=false,          
    pdftoolbar=true,        
    pdfmenubar=true,        
    pdffitwindow=false,     
    pdfstartview={FitH},    
    pdftitle={My title},    
    pdfauthor={Author},     
    pdfsubject={Subject},   
    pdfcreator={Creator},   
    pdfproducer={Producer}, 
    pdfkeywords={keyword1, key2, key3}, 
    pdfnewwindow=true,      
    colorlinks=false,       
    linkcolor=green,          
    citecolor=green,        
    filecolor=green,      
    urlcolor=green,           
    urlbordercolor={1 1 1}  
}

\newtheorem{theorem}{Theorem}[section]
\newtheorem*{theorem*}{Theorem}
\newtheorem*{conjecture*}{Conjecture}
\newtheorem{lemma}[theorem]{Lemma}

\renewcommand{\epsilon}{\varepsilon}
\renewcommand{\phi}{\varphi}
\renewcommand{\kappa}{\varkappa}

\theoremstyle{definition}

\newtheorem{question}[theorem]{Question}

\theoremstyle{remark}
\newtheorem{remark}[theorem]{Remark}

 \usepackage{etoolbox}

\theoremstyle{definition}

\theoremstyle{remark}

\begin{document}
\title[\footnotesize Spectral Estimates over Multiply Connected Domains]{\large Spectral Estimates over Multiply Connected Domains}

\author[\footnotesize ]{Georgios Tsikalas}
\address{Department of Mathematics, UC Santa Barbara, Santa Barbara, CA}
\email{gtsikalas@ucsb.edu}

\thanks{}

\subjclass[2020]{47A25} 
\keywords{spectral constant, intersection of disks, double-layer potential}
\small
\begin{abstract}
    \small
       We consider the quantum analogue of a disk with $n$ pairwise disjoint circular holes. We establish a uniform upper bound for the associated spectral constant that is independent of the geometry of the holes, together with a sharper asymptotic bound when the holes are well-separated. The latter generalizes recent estimates for the quantum annulus due to Crouzeix and Pascoe.

\end{abstract}
\maketitle

\section{Introduction}

Let $D(c, r)$ denote the open disk with center $c$ and radius $r.$ Given $R>0$ and disks $D(c_1, r_1), \dots, D(c_n, r_n)$ with pairwise disjoint closures and such that $|c_i|+r_i<R$ for all $i,$ define 
\[D_R\big((c_1, r_1), \dots, (c_n, r_n)\big):=D(0, R)\setminus \bigcup_{1\le i\le n} \overline{D(c_i, r_i)},\]
which is an open disk with $n$ circular holes. The  \textit{Badea-Beckermann-Crouzeix class} is defined  as
\begin{align*}
& \mathscr{D}_R\big((c_1, r_1), \dots, (c_n, r_n)\big) \\ 
=\ &\big\{A\in\mathbb{C}^{d\times d} : ||A||< R \ \text{ and }\ ||(A-c_i)^{-1}||<r^{-1}_i, \ 1\le i \le n. \big\},
\end{align*}
while the associated spectral constant is defined as the smallest constant \[K= K_R\big((c_1, r_1), \dots, (c_n, r_n)\big)\] such that 
\[||f(A)||\le K\sup_{z\in D_R((c_1, r_1), \dots, (c_n, r_n))}|f(z)| \]
whenever $A\in \mathscr{D}_R\big((c_1, r_1), \dots, (c_n, r_n)\big)$ and $f$ is holomorphic in $D_R\big((c_1, r_1), \dots, (c_n, r_n)\big)$.
 \par 
The simplest configuration results from taking $n=1, c_1=0$ and $r_1=1/R$ with $R>1$; this yields $D_R\big((0, 1/R)\big)=\{1/R<|z|<R\}$, and $\mathscr{D}_R\big((0, 1/R)\big)$ becomes the \textit{quantum annulus}. The study of $K_R\big((0, 1/R)\big)$ was initiated by Shields \cite{Shieldsbook}, while the first uniform (not depending on $R$) upper bound was found by Badea-Beckermann-Crouzeix \cite{BBCintersections}. The precise value of the constant remains unknown, though results of Crouzeix \cite{crouzeixRecentAnnulus} and of the author \cite{firstpaper} imply that
\[2\le K_R\big((0, 1/R)\big) \le 1+\sqrt{1+\frac{2}{R^2+1}}, \qquad R>1,\]
and thus $\lim_{R\to\infty}K_R\big((0, 1/R)\big)=2.$ 
The asymptotic behavior was, in fact, first determined by Pascoe \cite{Pascoecross} via dilation theory (see also the recent preprint \cite{PascoeTomar} for alternative approaches). \par 
This note is primarily motivated by the above asymptotic estimate. In particular, we are interested in configurations for $D_R\big((c_1, r_1), \dots, (c_n, r_n)\big)$ where the circular holes sit in a region that is small compared to the size of $\{|z|<R\}$ and are also  far from each other. More precisely, let $d_{ij}$ denote the distance from $D(c_i, r_i)$ to $D(c_j, r_j)$,  with  $D(c_0, r_0)=D(0, R)$ and $i\neq j$.
We aim to estimate $K_R\big((c_1, r_1), \dots, (c_n, r_n)\big)$ when
\begin{equation}\label{sep}
   \max_{0\le j\le n}\Big\{\sum_{1\le i\neq j}\cfrac{r_i}{d_{ij}} +\cfrac{R}{d_{0j}}-1\Big\}  <\epsilon 
\end{equation} 
 for some small $\epsilon>0.$

 \begin{theorem} \label{main}
Fix $R>0$, $c_1, \dots, c_n\in\mathbb{C}$ and $r_1, \dots, r_n>0,$ with $|c_i|+r_i<R$, for all $i$,  such that the disks $\overline{D(c_i, r_i)}$ do not overlap. Assume \eqref{sep} is satisfied
for some $\epsilon>0.$ Then, 
\begin{equation}\label{main-ineq}
 K_R\big((c_1, r_1), \dots, (c_n, r_n)\big)\le \cfrac{n+1}{2}+\sqrt{\frac{(n+1)^2}{4}+5\epsilon}. 
\end{equation} 
 \end{theorem}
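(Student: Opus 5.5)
We will follow the Crouzeix--Palencia / Delyon--Delyon approach through the double-layer potential, in the form used by Badea--Beckermann--Crouzeix and by Crouzeix for the annulus. Write $\Omega=D_R((c_1,r_1),\dots,(c_n,r_n))$ and fix $A$ in the Badea--Beckermann--Crouzeix class; by a small dilation/contraction argument we may assume the spectrum of $A$ lies in $\Omega$ and the defining inequalities are strict. For $f$ holomorphic on a neighborhood of $\overline\Omega$, the Cauchy formula gives $f(A)=\frac{1}{2\pi i}\int_{\partial\Omega} f(\sigma)(\sigma-A)^{-1}d\sigma$. We also consider the conjugate Cauchy transform $g=C(\bar f)$, i.e.\ $g(z)=\frac{1}{2\pi i}\int_{\partial\Omega}\overline{f(\sigma)}(\sigma-z)^{-1}d\sigma$. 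We then use the standard identity
\[
f(A)+g(A)^*=\int_{\partial\Omega} f(\sigma)\,\mu(\sigma,A)\,ds,
\]
where $\mu(\sigma,A)$ is the self-adjoint kernel $\frac{1}{2\pi}\big(\nu(\sigma-A)^{-1}+\bar\nu(\bar\sigma-A^*)^{-1}\big)$ and $\nu$ is the outer unit normal. On the outer circle $|\sigma|=R$, the condition $\|A\|<R$ gives $\mu\ge0$. On each inner circle $|\sigma-c_i|=r_i$ (oriented clockwise), the condition $\|(A-c_i)^{-1}\|<r_i^{-1}$ gives $\mu\ge 0$ as well. This positivity is the classical computation: each piece is, up to sign, a Poisson-type kernel for a disk or for the complement of a disk.

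The next step is to bound $\int_{\partial\Omega}\mu(\sigma,A)\,ds$. On each boundary circle $\Gamma_j$, the integral of the kernel for the corresponding disk alone equals $I$, because the full-disk Poisson kernel integrates to the identity. Hence $\big\|\int_{\partial\Omega}\mu\,ds\big\|\le n+1$, and so $\|f(A)+g(A)^*\|\le (n+1)\|f\|_\infty$.

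The main obstacle is controlling $\|g(A)\|$. The plan is to show that $\|g\|_{\infty,\Omega}\le \epsilon'\|f\|_\infty$ with $\epsilon'\lesssim\epsilon$, using the double-layer potential. On each circle, the Cauchy transform of $\bar f$ restricted to that same circle is constant, namely the mean value, by the classical circle property: the double-layer kernel of a circle is constant. This is why Crouzeix and Pascoe obtain the bound $2$ for the annulus. Contributions from the other circles $\Gamma_i$, evaluated at points of $\Gamma_j$, are bounded by the ratio of the length of $\Gamma_i$ to the distance, roughly $r_i/d_{ij}$ and $R/d_{0j}$. This is precisely where \eqref{sep} enters: after subtracting the constant parts, the defect is at most $\epsilon$ in the sense of the double-layer operator norm.

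Rather than bounding $g$ directly, we will use the refined Crouzeix--Palencia lemma. If $\|f(A)+g(A)^*\|\le a\|f\|$ and $g$ is controlled by $\|g\|\le b\|f\|$ up to constants absorbed in the mean-value terms, then the quadratic inequality $K^2\le (n+1)K+5\epsilon$ follows. This is the standard trick: apply the estimate to $h=f\,(f(A))^*$-type compositions, and use $\|f(A)\|^2=\|f(A)^*f(A)\|$. Solving that quadratic gives \eqref{main-ineq}.

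The constant $5$ will come from bookkeeping. It collects the $\epsilon$ from the off-diagonal double-layer terms, together with the additional constant terms (mean values) that must be handled as in Crouzeix's annulus argument via $\|c\,I\|\le|c|$. We expect this bookkeeping of the cross terms between different circles to be the delicate step.
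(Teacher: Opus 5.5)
Your identity $f(A)+g(A)^*=S(f,A)$ is right, but both estimates you attach to it are misstated.

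\textbf{The $S$ term.} $\mu(\sigma,A)$ is \emph{not} nonnegative on the inner circles. Already for scalar $z\in\Omega$ one has $\int_{C_i}\mu(\sigma,z)\,ds=0$, so $\mu(\cdot,z)$ changes sign on $C_i$. The correct bounds are $\mu(\sigma,A)\ge\frac{1}{2\pi R}I$ on $C_0$ and $\mu(\sigma,A)\ge-\frac{1}{2\pi r_i}I$ on $C_i$ (Lemma \ref{muA-inf}). If your positivity held, $\int_{\partial\Omega}\mu(\sigma,A)\,ds=2I$ would give $\|S(f,A)\|\le 2$, so your count $n+1$ does not come from your argument. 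The number $n+1=2-1+n$ is the mass of the shifted kernel $\nu\ge0$, equal to $\mu-\frac{1}{2\pi R}I$ on $C_0$ and $\mu+\frac{1}{2\pi r_i}I$ on $C_i$. What it bounds is $\|S(f,A)-c_1I\|$, where $c_1=\frac{1}{2\pi R}\int_{C_0}f\,ds-\sum_i\frac{1}{2\pi r_i}\int_{C_i}f\,ds$.

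\textbf{The $g$ term.} $g=C(\bar f)$ is not small: $f\equiv1$ gives $g\equiv1$. What is small is $g-c_2$ with $c_2=\overline{m_0(f)}$, where $m_k(f)$ denotes the mean of $f$ over $C_k$. At $\tau\in C_j$ the outer circle is not a perturbation, so your heuristic bound by $R/d_{0j}$ is of order one. Instead:
\begin{itemize}
\item for $\sigma\in C_0$, $|\mu(\sigma,\tau)-\frac{1}{\pi R}|\le\frac{1}{\pi R}(\frac{R}{d_{0j}}-1)$, so $C_0$ contributes $2c_2$ up to $2(\frac{R}{d_{0j}}-1)$;
\item the self-term on $C_j$ is exactly $-\overline{m_j(f)}$;
\item the other inner circles contribute at most $2\sum_{i\neq j}r_i/d_{ij}$;
\item one still needs $|m_j(f)-m_0(f)|\le\epsilon$, which follows from Cauchy's theorem for $f(\sigma)/(\sigma-c_j)$ with $c_j$ in the hole.
\end{itemize}
This is where the $-1$ in \eqref{sep} and the constant $5=2+2+1$ come from.

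\textbf{The missing step.} The decisive gap is the passage to $K^2\le(n+1)K+5\epsilon$. Both usable estimates hold only after subtracting $c_1I$ and $c_2I$, of size up to $n+1$ and $1$, and nothing in your outline lets you discard them. The proposed move ``apply the estimate to $h=f\,(f(A))^*$'' is not meaningful, since that is not a function to which the functional calculus applies. What is needed is the extremal machinery:
\begin{itemize}
\item an extremal $f_0$ (Lemma \ref{extremal}); it is inner, so $|f_0|=1$ on $\partial\Omega$;
\item a unit vector $x_0$ with $\|f_0(A)x_0\|=\|f_0(A)\|$;
\item Holbrook's orthogonality $\langle f_0(A)x_0,x_0\rangle=0$.
\end{itemize}
Then $f_0(A)=S(f_0,A)-g_0(A)^*$ and commutativity give
\[
\|f_0(A)\|^2=\langle f_0(A)x_0,(S(f_0,A)-c_1I)x_0\rangle-\langle f_0(A)(g_0(A)-c_2I)x_0,x_0\rangle ,
\]
so the constants cost nothing. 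The first term is at most $(n+1)\|f_0(A)\|$. For the second you need that, for this extremal vector, $h\mapsto\langle h(A)x_0,x_0\rangle$ is integration against a probability measure on $\partial\Omega$. That bounds it by $\|f_0(g_0-c_2)\|_\infty\le\|g_0-c_2\|_\infty\le5\epsilon$.

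Without the extremal pair and the orthogonality relation you are left with bounds of the type $\|f(A)\|\le\|S(f,A)-c_1I\|+|c_1|+\|g(A)-c_2I\|+|c_2|$. This route loses $|c_1|+|c_2|$ and does not give the asymptotic constant $n+1$.
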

 \noindent Thus, as $\epsilon\to 0$, we obtain the asymptotic upper bound of $n+1$ for the spectral constant. This recovers $\lim_{R\to\infty}K_R\big((0, 1/R)\big)=2$ in the case of the annulus. The double-layer potential kernel appears as an essential tool in the proof of Theorem \ref{main}; our arguments are heavily inspired by ideas and techniques of Malman-Mashreghi-O'Loughlin-Ransford \cite{MalmanBartosz} and Schwenninger-de Vries \cite{SchwenningerdeVries}. Indeed, the proof of Theorem \ref{main} is essentially based on the observation that, if \eqref{sep} is satisfied for some small $\epsilon>0$, then the Cauchy transform of any conjugate analytic function over 
 $D_R\big((c_1, r_1), \dots, (c_n, r_n)\big)$ will be close, in the supremum norm, to an appropriately chosen constant. 
 \par 
 Finally, we also provide a uniform upper bound. The main result of \cite{BBCintersections} implies that 
$K_R\big((c_1, r_1), \dots, (c_n, r_n)\big)$ is bounded above by $n+n(n-1)/\sqrt{3}$ (see also the recent paper \cite{Woerdemanintersections}). A slight modification of the proof of Theorem \ref{main} yields the following partial improvement:
\[K_R\big((c_1, r_1), \dots, (c_n, r_n)\big)\le \cfrac{n+1}{2}+\sqrt{\frac{(n+1)^2}{4}+n}, \]
for any configuration of interior, non-overlapping disks. This is Theorem \ref{main2}. 
\par 
Section \ref{prelims} contains basic preliminaries on the double-layer potential. Theorems \ref{main} and \ref{main2} are proved in Section \ref{mainsresults}, which also contains an open question.

\section{Main Results}

\subsection{Preliminaries}\label{prelims}

Set $\Omega=D_R\big((c_1, r_1), \dots, (c_n, r_n)\big)$,  $C_0=\partial D(0, R)$ and $C_i=\partial D(c_i, r_i),$ for $1\le i \le n$. Let $\mathcal{A}(\Omega)$ denote the set of all functions $f$ that are holomorphic in the interior and extend continuously to the boundary of $\Omega$. Further, let $\sigma(s),$ $s\in [0, L]$, denote the arc-length parametrization of $\partial \Omega$, with counterclockwise orientation for the exterior circle and clockwise orientation for the $n$ inner circles. In particular, we write $[0, L]=[0, L_0]\cup[L_0, L_1]\cup\cdots\cup[L_{n-1}, L_n],$ with $[L_{i-1}, L_{i}]$ parametrizing $C_i$ (we set $L_{-1}=0$).

For $z\neq \sigma(s)$, the double-layer potential kernel $\mu$ is defined by 
\[\mu(\sigma(s), z):=\cfrac{1}{\pi}\cfrac{d\ \arg (\sigma(s)-z)}{ds}=\cfrac{1}{2\pi i}\bigg(\cfrac{\sigma'(s)}{\sigma(s)-z}-\cfrac{\overline{\sigma'(s)}{}}{\overline{\sigma(s)}-\overline{z}} \bigg).\]

\noindent  Further, given $f\in\mathcal{A}(\Omega)$, $z\in \Omega$ and a matrix $A$ with spectrum in $\Omega,$ we have the usual Cauchy formulas:

\[f(z)=\cfrac{1}{2\pi i}\int_{\partial \Omega}\cfrac{f(\sigma)}{\sigma- z}\ d\sigma, \ \ \ f(A)=\cfrac{1}{2\pi i}\int_{\partial\Omega}f(\sigma)(\sigma I-A)^{-1}\ d\sigma\]
and 
\[g(z):=C(\overline{f}, z):=\cfrac{1}{2\pi i}\int_{\partial \Omega}\cfrac{\overline{f(\sigma)}}{\sigma- z}\ d\sigma,  \ \ \ g(A)=\cfrac{1}{2\pi i}\int_{\partial\Omega}\overline{f(\sigma)}(\sigma I-A)^{-1}\ d\sigma. \]

\noindent  We also define the  integral transforms with respect to $\mu$:
\[S(f, z):=\int_{[0, L]}f(\sigma(s))\mu(\sigma(s), z)\ ds, \ \ \ S(f, A):=\int_{[0, L]}f(\sigma(s))\mu(\sigma(s), A)\ ds,\]
where 
\[\mu(\sigma(s), A)=\cfrac{1}{2\pi i}\big( \sigma'(s)(\sigma(s)I-A)^{-1}- \overline{\sigma'(s)}(\overline{\sigma(s)}I-A^*)^{-1}\big).\]

If $A\in  \mathscr{D}_R((c_1, r_1), \dots, (c_n, r_n))$, we have the following lower bounds for $\mu(\sigma, A).$
 
\begin{lemma} \label{muA-inf} 
Assume $A\in \mathscr{D}_R((c_1, r_1), \dots, (c_n, r_n))$. Then,

\[\mu(\sigma, A)\ge \begin{cases}
    \cfrac{1}{2\pi R}I, \ \ \ \ &\forall \sigma\in C_0 \\
    -\cfrac{1}{2\pi r_i}I, \ \ \ \ &\forall  \sigma\in C_i,\ i\neq 0. 
\end{cases}\]
\end{lemma}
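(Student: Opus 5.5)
We compute $\mu(\sigma,A)$ explicitly on each boundary circle and reduce the claim to the hypotheses $\|A\|<R$ and $\|(A-c_i)^{-1}\|<r_i^{-1}$. The key identity is the classical one for a circle $\partial D(c,r)$ traversed counterclockwise, $\sigma=c+re^{i\theta}$, $ds=r\,d\theta$, $\sigma'(s)=ie^{i\theta}$. Write $B=A-c$ and $u=e^{i\theta}$, and factor $(rI-\bar u B)^{-1}$ on the left of the first term and $(rI-uB^*)^{-1}$ on the right of the second. Then
\[
2\pi\,\mu(\sigma,A)=u(\sigma-A)^{-1}+\bar u(\bar\sigma-A^*)^{-1}=(rI-\bar uB)^{-1}\big[\,2r I-\bar u B-uB^*\,\big](rI-uB^*)^{-1}.
\]
This can be rewritten as
\[
2\pi\,\mu(\sigma,A)=\frac{1}{r}\Big( (rI-\bar uB)^{-1}\big(r^2I-BB^*\big)(rI-uB^*)^{-1}+I\Big).
\]
To verify the rewriting, note that $(rI-\bar uB)(rI-uB^*)=r^2I-r\bar uB-ruB^*+BB^*$. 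Hence $2rI-\bar uB-uB^*$ equals $\tfrac1r\big[(r^2I-BB^*)+(rI-\bar uB)(rI-uB^*)\big]$.

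\textbf{The outer circle $C_0$.} Here $c=0$, $r=R$ and the orientation is counterclockwise, so $B=A$. Since $\|A\|<R$, the operator $R^2I-AA^*$ is positive definite, and so is the congruence $(RI-\bar uA)^{-1}(R^2I-AA^*)\big((RI-\bar uA)^{-1}\big)^*$. Note that $(RI-uA^*)^{-1}$ is indeed the adjoint of $(RI-\bar uA)^{-1}$. Dropping this positive term gives $2\pi\mu\ge \frac1R I$, i.e. $\mu(\sigma,A)\ge \frac{1}{2\pi R}I$.

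\textbf{The inner circles $C_i$.} Here the orientation is clockwise, which flips the sign: $\mu=-\mu_{\text{ccw}}$. With $B=A-c_i$ and $r=r_i$, we therefore need $\mu_{\text{ccw}}\le \frac{1}{2\pi r_i}I$, i.e. the congruence term must be $\le 0$. This requires $BB^*\ge r_i^2 I$. The hypothesis $\|B^{-1}\|<r_i^{-1}$ gives $\|B^{-1}x\|<r_i^{-1}\|x\|$ for every $x\neq 0$. Writing $x=B^*y$ and using $\|(B^*)^{-1}\|=\|B^{-1}\|$, this yields $\|B^*y\|> r_i\|y\|$, that is, $BB^*> r_i^2 I$. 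Hence $r_i^2I-BB^*\le 0$, its congruence is $\le 0$, and $\mu(\sigma,A)\ge -\frac{1}{2\pi r_i}I$.

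\textbf{Main obstacle.} The only delicate points are the algebraic factorization and the sign bookkeeping for orientation and adjoints. The inverses $(rI-\bar uB)^{-1}$ exist because $\sigma\notin\sigma(A)$. One must also remember that $\mu(\sigma,A)$ is defined with $\overline{\sigma'}$ and $A^*$, so that it is self-adjoint, which is what makes the factored form a genuine congruence. I would check the factorization by multiplying it out once, and test it on the scalar case $A=z$, where it reduces to the Poisson kernel formula $\frac{1}{2\pi r}\frac{r^2-|z-c|^2}{|\sigma-z|^2}+\frac{1}{2\pi r}$.
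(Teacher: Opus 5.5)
Your proof is correct, and it supplies the argument the paper leaves to a citation (Lemmas 6 and 7 of Crouzeix--Greenbaum): the congruence identity $2\pi r\,\mu_{\mathrm{ccw}}(\sigma,A)=I+M(r^2I-BB^*)M^*$ with $M=(rI-\bar uB)^{-1}$ gives both inequalities, using $AA^*<R^2I$ on $C_0$ and, after the orientation sign flip, $BB^*>r_i^2I$ on each $C_i$. One cosmetic fix: to get $BB^*>r_i^2I$, apply the bound $\|(B^*)^{-1}x\|<r_i^{-1}\|x\|$ to $x=B^*y$; the bound for $B^{-1}$ is not the one that applies there.
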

\begin{proof}
The inequalities follow immediately from \cite[Lemma 6]{Crouzeix-Greenbaum} and \cite[Lemma 7]{Crouzeix-Greenbaum}.
\end{proof}
We also require the following result on extremal functions.
\begin{lemma}[\cite{crouzeixmultiplyPick}]\label{extremal}
Let $A$ be a matrix with eigenvalues in $\Omega.$ There exists $f_0\in\mathcal{A}(\Omega)$ that realizes
\[||f_0(A)||=\max\big\{||f(A)|| : f\in\textup{Hol}(\Omega),\ \sup_{z\in\Omega}|f(z)|\le 1 \big\} \]
\end{lemma}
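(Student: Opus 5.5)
The plan is to first get \emph{some} extremal function by compactness, and then show that an extremal function can be taken continuous up to $\partial\Omega$ by reducing to a scalar linear extremal problem whose solutions are known to be regular.

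\textbf{Step 1: existence in the unit ball.} The unit ball $\mathcal{B}=\{f\in\textup{Hol}(\Omega): \sup_\Omega|f|\le 1\}$ is a normal family by Montel's theorem. Since $\sigma(A)\subset\Omega$, the map $f\mapsto f(A)$ is continuous for locally uniform convergence. This is clear from the Cauchy formula for $f(A)$ taken over a contour inside $\Omega$ surrounding $\sigma(A)$. Hence the supremum $M$ of $||f(A)||$ over $\mathcal{B}$ is attained by some $f_1\in\mathcal{B}$.

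\textbf{Step 2: reduction to a scalar functional.} Choose unit vectors $x,y$ with $\langle f_1(A)x,y\rangle=M$. Then $f_1$ also maximizes $\operatorname{Re}\Lambda(f)$ over $\mathcal{B}$, where $\Lambda(f)=\langle f(A)x,y\rangle$, because $\operatorname{Re}\Lambda(f)\le ||f(A)||\le M$ for every $f\in\mathcal{B}$. Conversely, any maximizer $f_0$ of $\operatorname{Re}\Lambda$ over $\mathcal{B}$ satisfies $||f_0(A)||\ge \operatorname{Re}\Lambda(f_0)=M$, so it is also extremal for the original problem. It therefore suffices to find a maximizer of the linear problem that lies in $\mathcal{A}(\Omega)$.

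Writing $f(A)$ via the Cauchy integral over $\partial\Omega$ gives
\[
\Lambda(f)=\int_{\partial\Omega} f(\sigma)\,k(\sigma)\,d\sigma, \qquad k(\sigma)=\frac{1}{2\pi i}\langle(\sigma I-A)^{-1}x,y\rangle .
\]
Here $k$ is a rational function whose poles lie only at eigenvalues of $A$, so $k$ is analytic across $\partial\Omega$. This is a classical linear extremal problem in $H^\infty$ of a finitely connected domain with analytic boundary.

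\textbf{Step 3: duality and regularity (the main obstacle).} I would invoke the Macintyre--Rogosinski--Shapiro--Havinson duality for finitely connected domains:
\[
\sup_{\mathcal{B}}\operatorname{Re}\Lambda=\min_{g\in E^1(\Omega)}\int_{\partial\Omega}|k-g|\,|d\sigma| .
\]
The minimum is attained by some $g_0$. Any extremal $f$ must then satisfy $|f|=1$ a.e. on $\partial\Omega$ and
\[
f\,(k-g_0)\,d\sigma=|k-g_0|\,|d\sigma| \quad\text{a.e. on }\partial\Omega .
\]
Because $k$ is analytic across each circle $C_i$, the function $(k-g_0)\,d\sigma$ can be continued across $\partial\Omega$ by Schwarz reflection in the circles. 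Combining this with the boundary relation shows that an extremal $f$ is a generalized Blaschke product with finitely many zeros. Such an $f$ extends analytically across every $C_i$, so $f\in\mathcal{A}(\Omega)$; this is the desired $f_0$.

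The delicate parts are the reflection and regularity argument and the treatment of possible zeros of $k-g_0$ on the boundary. For these I would rely on the standard theory (Khavinson, Fisher) for $n+1$ analytic boundary curves rather than redo it. If that theory were not available, my first fallback would be to pass to the conformally equivalent circle domain and use Ahlfors-type results on extremal problems there.
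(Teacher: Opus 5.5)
Your outline is sound, but it follows a different route from the paper. The paper gives no argument of its own: it quotes the lemma from Crouzeix's work on multiply connected domains (via Pick interpolation) and remarks that $f_0$ is a uniquely determined inner function.

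You instead turn the operator problem into a scalar linear one. Montel gives a maximizer $f_1$. The functional $\Lambda(f)=\langle f(A)x,y\rangle$, built from a norming pair for $f_1(A)$, has a rational kernel $k$ that is analytic across $\partial\Omega$. Every maximizer of $\operatorname{Re}\Lambda$ is again a norm-maximizer. The Khavinson--Rogosinski--Shapiro duality in $E^1(\Omega)$, plus the classical boundary regularity of extremals, then finishes the argument.

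This reduction is transparent and shows that \emph{every} maximizer of $\operatorname{Re}\Lambda$, in particular $f_1$, lies in $\mathcal{A}(\Omega)$. It does not by itself give the uniqueness the paper mentions, because different norm-maximizers come with different norming pairs $x,y$. The interpolation viewpoint supplies that: $f(A)$ depends only on the jets of $f$ at the eigenvalues, so a maximizer can be replaced by the unique minimal-norm interpolant of its data. In both routes the analytic core, boundary regularity on a finitely connected domain with analytic boundary, is taken from the literature.

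One sentence of Step 3 needs correcting, because as written it would not work. $(k-g_0)\,d\sigma$ by itself cannot be continued by Schwarz reflection: $g_0$ is merely in $E^1(\Omega)$, and its boundary values satisfy no reality condition. What reflects is the product.

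Let $C_i$ have center $c_i$ (with $c_0=0$). The equality condition $f(k-g_0)\,d\sigma=|k-g_0|\,|d\sigma|$ says that $\Psi_i(z)=\pm i\,(z-c_i)\,f(z)\bigl(k(z)-g_0(z)\bigr)$, with sign fixed by the orientation of $C_i$, is $\ge 0$ a.e. on $C_i$. Since $\Psi_i$ is of class $E^1$ in a collar of $C_i$, its imaginary part is the Poisson integral of its vanishing boundary values. Hence $\Psi_i$ continues analytically across $C_i$.

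Combine this with the following facts:
\begin{itemize}
\item $|f|=1$ a.e.\ on $\partial\Omega$. This needs $k-g_0\not\equiv 0$, which holds because $M\ge\|I\|=1$; you should say so.
\item $k-g_0$ is of Smirnov class.
\end{itemize}
Together these show that $f$ has no singular inner factor and only finitely many zeros. So $\log|f|$ is minus a finite sum of Green's functions of $\Omega$, which extends harmonically across the circles. Hence $f$ extends analytically across $\partial\Omega$, and so does $k-g_0=\Psi_i/\bigl(\pm i(z-c_i)f\bigr)$.
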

\noindent In fact, $f_0$ is a uniquely defined inner function.

\subsection{Main Results} \label{mainsresults}
\begin{proof}[Proof of Theorem \ref{main}] Fix $\Omega=D_R\big((c_1, r_1), \dots, (c_n, r_n)\big)$ and $A\in \mathscr{D}_R\big((c_1, r_1), \dots, (c_n, r_n)\big)$.  Assume $\epsilon>0$ is such that \eqref{sep} holds. 
Let $f_0$ be an extremal function as in Lemma \ref{extremal}. If $||f_0(A)||=1,$ there is nothing to prove, so assume $||f_0(A)||>1$. Consider a unit vector $x_0$ such that $||f_0(A)x_0||=||f_0(A)||.$ By \cite{Holbrooksmall}, we have $\langle f_0(A)x_0, x_0\rangle=0$. Set $g_0=C(\overline{f_0})$ and
\begin{align}
c_1&=\cfrac{1}{2\pi R}\int_{[0, L_0]}f_0(\sigma(s))\ ds-\sum_{i=1}^n\cfrac{1}{2\pi r_i}\int_{[L_{i-1}, L_i]}f_0(\sigma(s))\ ds\notag  \\
    c_2&=\cfrac{1}{2\pi R}\int_{[0, L_0]}\overline{f_0(\sigma(s))}\ ds.\label{c2}
\end{align}
Since $f_0(A)=S(f_0, A)-g_0(A)^*$, we have
\begin{align}
||f_0(A)x_0||^2&=\langle f_0(A)x_0, f_0(A)x_0\rangle \notag \\
 &=\langle f_0(A)x_0, S(f_0, A)x_0\rangle -\langle f_0(A)x_0, g_0(A)^*x_0\rangle \notag \\ 
&=\langle f_0(A)x_0, [S(f_0, A)-c_1I]x_0\rangle -\langle f_0(A)[g_0(A)-c_2I]x_0, x_0\rangle \label{decomp}
\end{align}
Now, set 
\begin{equation}\label{nu}
 \nu(\sigma,A)=\begin{cases} \mu(\sigma, A)-\cfrac{1}{2\pi R}I, & \sigma\in C_0 \\ 
\mu(\sigma, A)+\cfrac{1}{2\pi r_i}I & \sigma\in C_i, \ i\neq 0.
\end{cases} 
\end{equation}
By Lemma \ref{muA-inf}, $\nu(\sigma, A)\ge 0$ for all $\sigma\in \partial\Omega.$ Also, $\sigma(A)\subset\Omega$ implies $\int_{[0, L]}\mu(\sigma(s),A)\ ds=2$.
Thus, we may write 
\begin{align}
||S(f_0, A)-c_1I||=& \bigg|\bigg|\int_{[0, L]}\nu(\sigma(s), A)f_0(\sigma(s))\ ds\bigg|\bigg| \notag\\ 
\le &\bigg|\bigg| \int_{[0, L]}\nu(\sigma(s), A)\ ds\bigg|\bigg|\notag \\
= \ &\bigg|\bigg|\int_{[0, L]}\mu(\sigma(s), A)\ d s-\int_{[0, L_0]}\cfrac{1}{2\pi R}\ ds+\sum_{i=1}^n\int_{[L_{i-1}, L_i]}\cfrac{1}{2\pi r_i}\ d s\bigg|\bigg|\notag \\
= \ & 2-1+n \notag\\
= \ & n+1. \label{c1-est}
\end{align}
Next, by  \cite[Theorem 4.5]{Manycrouzeix}, there exists a probability measure $\mu$ such that 
\begin{equation} \label{int-rep}
    \langle h(A)x_0, x_0 \rangle=\int_{\partial\Omega}h(\sigma)\ d\mu(\sigma), \qquad h\in\mathcal{A}(\Omega).
\end{equation} 
 Also, by basic potential theory, we know that, for any $\tau\in\partial\Omega,$
 \[g_0(\tau)=\int_{[0, L]}\mu(\sigma(s),\tau)\overline{f_0(\sigma(s))}\ ds.\]
We will now estimate $|g_0-c_2|.$  Recall that $d_{ij}$  denotes the distance from $D(c_i, r_i)$ to $D(c_j, r_j)$. We will use the elementary estimate
\begin{equation*}
 |\mu(\sigma, z)|\le \cfrac{1}{\pi d_{ij}} \qquad \sigma\in C_i,\ z\in C_j,\ i\neq j,  
\end{equation*}
repeatedly. First, assume $\tau\in C_0.$ Then, $\mu(\sigma, \tau)=1/(2\pi R)$ whenever $\sigma\in C_0,$ and thus
\begin{align}
&|g_0(\tau)-c_2|\notag \\ 
=\ &\bigg|\int_{[0, L_0]}\mu(\sigma(s),\tau)\overline{f_0(\sigma(s))}\ ds+\sum_{i=1}^n\int_{[L_{i-1}, L_i]}\mu(\sigma(s), \tau)\overline{f_0(\sigma(s))}\ ds-c_2 \bigg|  \notag\\
=\ &\bigg|\cfrac{1}{2\pi R}\int_{[0, L_0]}\overline{f_0(\sigma(s))}\ ds+\sum_{i=1}^n\int_{[L_{i-1}, L_i]}\mu(\sigma(s), \tau)\overline{f_0(\sigma(s))}\ ds-c_2 \bigg| \notag \\
=\ &\bigg|\sum_{i=1}^n\int_{[L_{i-1}, L_i]}\mu(\sigma(s), \tau)\overline{f_0(\sigma(s))}\ ds \bigg| \notag \\
\le\ & 2\sum_{i=1}^n\cfrac{r_i}{R-|c_i|-r_i} \notag \\
=\ & 2\sum_{i=1}^n\cfrac{r_i}{d_{0i}} \notag \\
\le\ & 2\epsilon.
\label{est1}
\end{align}
Now, let $\tau\in C_j, j\neq 0.$ First, observe that, since $\int_{\partial\Omega}\frac{f(\sigma)}{\sigma-c_j}\ d\sigma=0 $, 
\begin{align}
& -\cfrac{1}{2\pi r_j}\int_{[L_{j-1}, L_j]}\overline{f_0(\sigma(s))}\ ds+c_2 \notag\\
=\ &\cfrac{1}{2\pi i}\int_{C_j}\cfrac{\overline{f_0(\sigma)}}{\sigma-c_j}\ d\sigma+\cfrac{1}{2\pi i}\int_{C_0}\cfrac{\overline{f_0(\sigma)}}{\sigma}\ d\sigma \notag\\ 
=\ &-\cfrac{1}{2\pi i}\sum_{0<i\neq j}\int_{C_i}\cfrac{\overline{f_0(\sigma)}}{\sigma-c_j}\ d\sigma
-\cfrac{1}{2\pi i}\int_{C_0}\cfrac{\overline{f_0(\sigma)}}{\sigma-c_j}\ d\sigma +\cfrac{1}{2\pi i}\int_{C_0}\cfrac{\overline{f_0(\sigma)}}{\sigma}\ d\sigma \notag\\ 
=\ &-\cfrac{1}{2\pi i}\sum_{0<i\neq j}\int_{C_i}\cfrac{\overline{f_0(\sigma)}}{\sigma-c_j}\ d\sigma
-\cfrac{c_j}{2\pi i}\int_{C_0}\cfrac{\overline{f_0(\sigma)}}{\sigma(\sigma-c_j)}\ d\sigma \notag,
\end{align}
and thus
\begin{align}\label{eq1}
\bigg|  -\cfrac{1}{2\pi r_j}\int_{[L_{j-1}, L_j]}\overline{f_0(\sigma(s))}\ ds+c_2\bigg| & \le \sum_{0<i\neq j}\cfrac{r_i}{d_{ij}}+\cfrac{|c_j|}{R-|c_j|} \notag \\
& \le \sum_{0<i\neq j}\cfrac{r_i}{d_{ij}}+\cfrac{R}{d_{0j}}-1 \notag \\
& \le \epsilon.
\end{align}
Further, recall that, if $\sigma\in C_0$ and $z\in\overline{\Omega},$ then $\big(2\pi \mu(\sigma,z )\big)^{-1}$ equals the radius of the unique circle passing through $z$ that is contained in $\overline{\Omega}$ and is tangent to $C_0$ at $\sigma$. Consequently, $\tau\in C_j$ yields
\[\cfrac{1}{\pi(R+|c_j|+r_j)} \le \mu(\sigma, \tau) \le \cfrac{1}{\pi(R-|c_j|-r_j)},    \qquad \sigma\in C_0,\]
and thus
\begin{equation*} 
    -\cfrac{|c_j|+r_j}{\pi R(R+|c_j|+r_j)} \le \mu(\sigma, \tau)-\cfrac{1}{\pi R} \le \cfrac{|c_j|+r_j}{\pi R(R-|c_j|-r_j)},    \qquad \sigma\in C_0.
\end{equation*}
 This, in particular, yields
\begin{align}
\bigg|\int_{[0, L_0]}\mu(\sigma(s), \tau)\overline{f_0(\sigma(s))}\ ds -2c_2\bigg|& =\bigg|\int_{[0, L_0]}\bigg(\mu(\sigma(s), \tau)-\cfrac{1}{\pi R}\bigg)\overline{f_0(\sigma(s))}\ ds\bigg|\notag \\
& \le  
2 \cfrac{|c_j|+r_j}{R-|c_j|-r_j} \notag 
\\
& =
2 (-1+R/d_{0j}) \notag 
\\
& \le 2\epsilon. \label{eq2}
\end{align}
Combining \eqref{eq1}-\eqref{eq2}, we now obtain
\begin{align}
&|g_0(\tau)-c_2| \notag \\
\le\ & \bigg| \int_{[0, L_0]}\mu(\sigma(s), \tau)\overline{f_0(\sigma(s))}\ ds-2c_2 \bigg|  \notag \\ 
&+\bigg| \sum_{1\le i\neq j}\int_{[L_{i-1}, L_{i}]}\mu(\sigma(s), \tau) \overline{f_0(\sigma(s))}\ ds\bigg|  +  \bigg|\int_{[L_{j-1}, L_j]}\mu(\sigma(s), \tau)\overline{f_0(\sigma(s))}\ ds+c_2\bigg| \notag  \\
\le\ & \bigg| \int_{[0, L_0]}\mu(\sigma(s), \tau)\overline{f_0(\sigma(s))}\ ds-2c_2 \bigg|  \notag \\ 
&+2\sum_{1\le i\neq j}\cfrac{r_i}{d_{ij}} + \bigg| -\cfrac{1}{2\pi r_j}\int_{[L_{j-1}, L_j]}\overline{f_0(\sigma(s))}\ ds+c_2 \bigg|\notag  \\
\le\ & 5\epsilon.\label{est2}
\end{align}
\eqref{est1} and \eqref{est2} yield $||g_0-c_2||_{\infty}<5\epsilon.$ Combining with \eqref{decomp}-\eqref{int-rep}, we obtain
\begin{align}
||f_0(A)||^2& \le ||f_0(A)||\cdot ||S(f_0,A)-c_1I||+\bigg|\int_{\partial\Omega}f_0(\sigma)(g_0(\sigma)-c_2)\ d\mu(\sigma)\bigg| \notag  \\
&\le (n+1)||f_0(A)||+||g_0-c_2||_{\infty} \notag \\ 
&\le (n+1)||f_0(A)||+5\epsilon. \label{almost-final}
\end{align}
\noindent From this last inequality, we finally  obtain
\begin{align*}
||f_0(A)||\le \ &  \cfrac{1}{2}(n+1)+\sqrt{\cfrac{1}{4}(n+1)^2+5\epsilon},
\end{align*}
as desired. 

\end{proof}
 We now prove our uniform estimate for $K_R\big((c_1, r_1), \dots, (c_n, r_n)\big)$, which is restated here for the reader's convenience.
\begin{theorem}\label{main2}
Fix $R>0$, $c_1, \dots, c_n\in\mathbb{C}$ and $r_1, \dots, r_n>0,$ with $|c_i|+r_i<R$, for all $i$, and such that the disks $\overline{D(c_i, r_i)}$ do not overlap.  Then,
\[K_R\big((c_1, r_1), \dots, (c_n, r_n)\big)\le \cfrac{n+1}{2}+\sqrt{\frac{(n+1)^2}{4}+n}. \]
\end{theorem}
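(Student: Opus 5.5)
We follow the proof of Theorem \ref{main} verbatim up to inequality \eqref{almost-final}, changing only the estimate of the term coming from $g_0=C(\overline{f_0})$. As before, take the extremal function $f_0$ of Lemma \ref{extremal} and assume $\|f_0(A)\|>1$. Pick a unit vector $x_0$ with $\|f_0(A)x_0\|=\|f_0(A)\|$, so that $\langle f_0(A)x_0,x_0\rangle=0$. Write the decomposition \eqref{decomp} with the same constant $c_1$, and keep a constant $c$ in place of $c_2$, to be chosen. The bound $\|S(f_0,A)-c_1I\|\le n+1$ from \eqref{c1-est} used no separation hypothesis, only Lemma \ref{muA-inf} and $\int\mu(\sigma(s),A)\,ds=2$, so it remains valid. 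Using the representing probability measure \eqref{int-rep}, we again get
\[
\|f_0(A)\|^2\le (n+1)\|f_0(A)\|+\|g_0-c\|_{\infty,\partial\Omega}.
\]
It therefore suffices to find a constant $c$ with $\|g_0-c\|_\infty\le n$ on $\partial\Omega$. Solving the resulting quadratic inequality then gives the stated bound.

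To estimate $g_0-c$, use $g_0(\tau)=\int_{[0,L]}\mu(\sigma(s),\tau)\overline{f_0(\sigma(s))}\,ds$ for $\tau\in\partial\Omega$. The key input is the sign structure of the double-layer kernel on each circle. For $\sigma\in C_0$ we have $\mu(\sigma,\tau)\ge 0$, since the boundary point $\tau$ lies in the closed disk $\overline{D(0,R)}$. For $\sigma\in C_i$ with $i\ge1$ (clockwise orientation) and $\tau$ outside $D(c_i,r_i)$, we have $\mu(\sigma,\tau)\le 0$. Moreover, each circle's total mass is explicit: $\int_{C_0}\mu(\cdot,\tau)\,ds$ equals $2$ if $\tau\in\operatorname{int}$ and $1$ if $\tau\in C_0$, while $\int_{C_i}\mu(\cdot,\tau)\,ds$ equals $0$ for $\tau$ outside $\overline{D(c_i,r_i)}$ and $-1$ for $\tau\in C_i$.

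The natural choice is again $c=c_2=\frac{1}{2\pi R}\int_{C_0}\overline{f_0}\,ds$, the mean of $\overline{f_0}$ on $C_0$. Alternatively one can take $c=0$ and exploit the cancellation differently; I would first try $c=c_2$, so the argument stays close to the proof of Theorem \ref{main}.

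\textbf{Case $\tau\in C_0$.} The $C_0$ part of $g_0(\tau)$ equals $c_2$ exactly, because $\mu=1/(2\pi R)$ there. The remaining part is a sum over the $n$ inner circles. On $C_i$ the kernel has one sign and total integral $0$; this is problematic, since zero net mass does not bound the total variation. So instead I would bound each inner-circle term using the fact that the harmonic extension of $\mu(\cdot,\tau)$ structure gives total variation $\int_{C_i}|\mu(\sigma,\tau)|\,ds\le 1$ for $\tau$ outside $D(c_i,r_i)$. Indeed, $\pi\mu\,ds=d\arg(\sigma-\tau)$, and as $\sigma$ runs over a circle not containing $\tau$, $\arg(\sigma-\tau)$ sweeps the visual angle ($<\pi$) once forward and once backward, so the total variation is $<2\pi/\pi=2$. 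That yields $2n$, which is too weak. To get $n$, I would subtract from each inner-circle integrand the constant value $\overline{f_0}$ attains at the tangent points, or better, use the orthogonality trick from \eqref{eq1}: $\int_{\partial\Omega}\overline{f_0(\sigma)}\,d\sigma/(\sigma-c_j)$-type identities allow replacing $\overline{f_0}$ by $\overline{f_0}-\lambda_i$ on $C_i$ at no cost, since the kernel has zero mass on $C_i$. Optimizing $\lambda_i$ halves the variation bound, giving $\le 1$ per circle, hence $\le n$.

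\textbf{Case $\tau\in C_j$, $j\ge1$.} We handle $C_0$ (mass $2$, positive) by subtracting $2c_2$ as in \eqref{eq2}, handle $C_j$ (mass $-1$) as in \eqref{eq1}, and handle the other inner circles as above. The main obstacle is exactly this step: bounding each circle's contribution by a total of $n$ rather than $2n$ or $n+O(1)$ without any separation hypothesis. I expect to need a positivity argument. Writing $\mu=\mu^+$ on $C_0$ and $\mu\le0$ on the $C_i$, one can use $|f_0|\le1$ to write $g_0(\tau)-c$ as a difference of two positive-weighted averages of $\overline{f_0}$, with total positive and negative masses controlled by $n$ and $1$. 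Matching these masses against the constant $c$ should give the sharp $n$.
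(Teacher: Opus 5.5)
Your reduction is correct and matches the paper. The bound \eqref{c1-est} needs no separation, any constant $c$ may replace $c_2$ in \eqref{decomp} because $\langle f_0(A)x_0,x_0\rangle=0$, and it suffices to find $c$ with $|g_0(\tau)-c|\le n$ on $\partial\Omega$. The gap is in producing that constant.

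With your choice $c=\frac{1}{2\pi R}\int_{[0,L_0]}\overline{f_0}\,ds$, put $\lambda_i=\frac{1}{2\pi r_i}\int_{[L_{i-1},L_i]}\overline{f_0}\,ds$, and let $\nu(\sigma,\tau)$ be as in \eqref{nu} with $\tau$ in place of $A$. For $\tau\in C_0$ you get
\[
g_0(\tau)-c=\sum_{i=1}^n\Big(\int_{[L_{i-1},L_i]}\nu(\sigma(s),\tau)\,\overline{f_0(\sigma(s))}\,ds-\lambda_i\Big).
\]
Nothing in your argument controls the $\tau$-independent terms $-\lambda_i$, so you are stuck at $2n$. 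Your proposed repair cannot fix this. Because $\int_{C_i}\mu(\cdot,\tau)\,ds=0$, replacing $\overline{f_0}$ by $\overline{f_0}-\lambda_i$ on $C_i$ leaves the integral literally unchanged. A zero-mass kernel of total variation $<2$, integrated against a function bounded by $1$, only gives a per-circle bound $<2$, not $1$; there is no halving.

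Two further steps fail:
\begin{enumerate}
\item The claim $\mu(\sigma,\tau)\le 0$ on $C_i$ for $\tau\notin\overline{D(c_i,r_i)}$ is false. Since $\mu(\sigma,\tau)=-\frac{1}{2\pi r_i}+\frac{|\tau-c_i|^2-r_i^2}{2\pi r_i|\sigma-\tau|^2}$, it is positive near the point of $C_i$ nearest $\tau$, as it must be given zero mass.
\item For $\tau\in C_j$ you reuse \eqref{eq2}, whose bound $2(R/d_{0j}-1)$ is unbounded without a separation hypothesis.
\end{enumerate}

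The missing idea is to put the constants $\lambda_i$ into $c$, i.e.\ to choose $c$ as the exact conjugate analogue of $c_1$:
\[
c=\frac{1}{2\pi R}\int_{[0,L_0]}\overline{f_0(\sigma(s))}\,ds-\sum_{i=1}^n\frac{1}{2\pi r_i}\int_{[L_{i-1},L_i]}\overline{f_0(\sigma(s))}\,ds.
\]
In the concentric annulus, for example, both means equal $\overline{a_0}$, the constant Laurent coefficient of $f_0$, so the right constant there is $0$, not $\overline{a_0}$.

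You then need the one-sided scalar bounds $\mu(\sigma,\tau)\ge\frac{1}{2\pi R}$ for $\sigma\in C_0$ and $\mu(\sigma,\tau)\ge-\frac{1}{2\pi r_i}$ for $\sigma\in C_i$, valid for every $\tau\in\partial\Omega$. These are the scalar analogues of Lemma \ref{muA-inf} and are immediate from the explicit formula for $\mu$ on a circle. Then $g_0(\tau)-c=\int_{[0,L]}\nu(\sigma(s),\tau)\,\overline{f_0(\sigma(s))}\,ds$ with $\nu\ge 0$. By your own mass computations, $\int_{[0,L]}\mu(\sigma(s),\tau)\,ds=1$ for $\tau\in\partial\Omega$, so $\int_{[0,L]}\nu(\sigma(s),\tau)\,ds=1-1+n=n$.

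This gives $|g_0(\tau)-c|\le n$ uniformly in $\tau$, with no case split and no separation. It is the same positivity argument as \eqref{c1-est}, applied to the scalar kernel, and it is precisely the paper's proof.
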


\begin{proof}
This is, for the most part, a repetition of the previous proof; the only difference is that, instead of defining $c_2$ as in \eqref{c2}, we instead set \[ c_2=\cfrac{1}{2\pi R}\int_{[0, L_0]}\overline{f_0(\sigma(s))}\ ds-\sum_{i=1}^n\cfrac{1}{2\pi r_i}\int_{[L_{i-1}, L_i]}\overline{f_0(\sigma(s))}\ ds. \]
Now, given $\tau\in\partial\Omega,$ define $\nu(\sigma, \tau)$ as in \eqref{nu}. Then,  $\nu(\sigma, \tau)\ge 0$ for all $\tau\in\partial\Omega,$ and, since $\int_{[0, L]}\mu(\sigma, \tau)\ ds=1$ for such $\tau$, we may argue
 as in \eqref{c1-est} to obtain
\begin{align}
|g_0(\tau)-c_2|&=\bigg|\int_{[0, L]}\nu(\sigma, \tau)\overline{f_0(\sigma(s))}\ ds \bigg| \notag \\ 
&\le \int_{[0, L]} \nu(\sigma, \tau)  \ ds \notag \\ 
&= n. \notag      
\end{align}
Finally, arguing as in \eqref{almost-final} gives
\[||f_0(A)||^2\le (n+1)||f_0(A)||+n, \]
from where the desired estimate follows. 
\end{proof}
\begin{remark}
A comparison of \eqref{sep} with the proof of Theorem \ref{main} indicates that the hypothesis is not optimal and can be weakened. We do not pursue this refinement here, as our focus is on asymptotic behavior.
\end{remark}

We conclude with an open question. Note that, in the case of the quantum annulus, the asymptotic upper bound of $2$ is also a uniform lower bound for $K_R\big((0, 1/R)\big)$. Does an analogous lower bound hold for $K_R\big((c_1, r_1), \dots, (c_n, r_n)\big)$?
\begin{question}
Is it true that 
\[K_R\big((c_1, r_1), \dots, (c_n, r_n)\big)\ge n+1\]
for any configuration of interior, non-overlapping disks?
\end{question}

\printbibliography

\end{document}